\newtheorem{thm}{Theorem} 
\newtheorem{lem}[thm]{Lemma}
\newcommand\R{{\mathbb{R}}}
\newcommand\N{\mathbb{N}}
\newcommand{\cP}{\mathcal{P}}
\newcommand{\bD}{\mathbb{D}}
\newcommand{\D}{D_{\cP}}
\newcommand{\I}{[0,1)}
\newcommand{\norm}{\left\|\D|L_{2}\right\|}
\newcommand{\ud}{\, \mathrm{d}}
\title{An improved lower bound for the $L_{2}$-discrepancy}
\date{}
\author{Aicke Hinrichs} 
\address{Institute of Analysis, University Linz}
\email{aicke.hinrichs@jku.at}
\author{Gerhard Larcher}
\address{ Institute of Financial Mathematics and Applied Number Theory, University Linz}
\email{gerhard.larcher@jku.at}
\thanks{The second author is partially supported by the Austrian Science Fund (FWF), Project F5507-N26, which is a part of the Special Research Program ``Quasi-Monte Carlo Methods: Theory and Applications''.}
\begin{document}

\begin{abstract}
We give an improved lower bound for the $L_2$-discrepancy of finite point sets in the unit square.
\end{abstract}

\maketitle


\section{Introduction and main result}

Let $\cP=\{p_{1},p_{2},\ldots,p_{N}\}$ be a finite point set in the $d$-dimensional unit cube $\I^{d}$. For a rectangular box $B_{x}=[0,x_{1})\times\cdots\times[0,x_{d})$ in $\I^{d}$ let the discrepancy function be defined as
\begin{equation*}
 \D(x) := \sum_{i=1}^{N}\chi_{B_x}(p_i)-N\cdot x_1\cdots x_{d},
\end{equation*}
where $\chi_{B_x}$ denotes the characteristic function of the set $B_{x}$ and $x:=(x_{1},\ldots,x_{d})$. So the discrepancy function measures the deviation of the number of points of $\cP$ in the box $B_{x}$ from the fair number of points $N\cdot x_{1}\cdots x_{d}=:N|B_{x}|$.

The $L_{2}$-discrepancy of $\cP$ is the $L_2$-norm of $\D(x)$, given by
\begin{equation*}
 \norm=\left(  \int_{\I^{d}} \D^{2}(x)\ud x \right)^{\frac{1}{2}}.
\end{equation*}
By a celebrated result of K.F. Roth \cite{R} (see also \cite{KN}) it is known, that for every dimension $d$ there are constants $0<c_d<c_{d}^{\prime}$ such that
\begin{itemize}
 \item[(i)] for all $N$-point sets $\cP$ in $\I^{d}$ we have 
\begin{equation*}
 \norm\geq c_{d}\cdot\left(\log N\right)^{\frac{d-1}{2}}
\end{equation*}
\end{itemize}
and
\begin{itemize}
 \item[(ii)] for any $N\ge 2$, there exists an  $N$-point set $\cP$ in $\I^{d}$ such that 
 \begin{equation*}
  \norm\leq c_{d}^{\prime}\cdot\left(\log N\right)^{\frac{d-1}{2}}.
 \end{equation*}
\end{itemize}
Hence, it makes sense to ask for the positive real $\bar{c}_{d}$ which is defined by
\begin{equation*}
 \bar{c}_{d}:=\inf_{N\ge 2} \inf_{\# \cP = N} \norm\cdot\left(\log N\right)^{-\frac{d-1}{2}},
\end{equation*}
where the second  infimum is taken over all $N$-point sets $\cP$ in $\I^{d}$. 

In this work we are mainly interested in dimension $2$, i.e., in $\bar{c}_2$. 
To discuss known results, let us also introduce the constants
\begin{equation*}
 \bar{a}_{d}:=\liminf_{N\to\infty} \inf_{\# \cP = N} \norm\cdot\left(\log N\right)^{-\frac{d-1}{2}}
\end{equation*}
and
\begin{equation*}
 \bar{b}_{d}:=\limsup_{N\to\infty} \inf_{\# \cP = N} \norm\cdot\left(\log N\right)^{-\frac{d-1}{2}},
\end{equation*}
which are the interesting constants if we want to study the asymptotic behavior of the $L_2$-discrepancy for $N\to \infty$.
Obviously, 
\begin{equation*}
 \bar{c}_{d} \le \bar{a}_{d} \le \bar{b}_{d}.
\end{equation*}

The best constructions known so far  yield the upper bound
\begin{equation*}
 \bar{a}_{2}  \leq 0.17907\ldots
\end{equation*}
This is from a recent construction in \cite{FPPS} using generalized scrambled Hammersley point sets.
There is numerical evidence obtained in \cite{BTR} that the Fibonacci lattices are slightly better and give 
\begin{equation*}
 \bar{a}_{2}  \leq 0.176006\ldots
\end{equation*}
But until now this is not proved.

The best lower bounds known so far are
\begin{equation}\label{eq:corest1}
  0.038925\ldots \le \bar{b}_{2} 
\end{equation}
and 
\begin{equation}\label{eq:corest2}
 0.03276\ldots\leq\bar{c}_2.
\end{equation}

These bounds are shown in \cite{HM}.
Actually, there the first lower bound was even claimed to hold for $\bar{c}_2$. 
However, there is a small inaccuracy in the proof of the lower bound in \cite{HM}: At the end of Section~3, when the optimal value for the parameter $\gamma$ is determined, in the paper the maximum of the function $2^{-6}3^{-1}y^{2}-2^{-5}7^{-1}y^{3}$ for $1/2<y\leq1$ was determined. In fact, however,
one does not need the maximum but the minimum of this expression, which is attained for $y=1$. If we carry out the calculation in the correct way, we obtain the correct estimate \eqref{eq:corest2}.
But taking the maximum still gives the lower bound $\eqref{eq:corest1}$. The estimates in \cite{HM} for $d>2$ have also to be adjusted in the same way.

These lower bounds are also valid for the weighted $L_2$-discrepancy. In this paper we concentrate on the case of equal weights.
Here our aim is to improve these lower estimates. We show

\begin{thm}\label{thm:main}
 \begin{equation*}
   \bar{c}_2\geq0.0515599\ldots
 \end{equation*}
 That means: For every $N$-point set $\cP$ in $\I^{2}$ we have
 \begin{equation*}
  \norm\geq0.0515599\ldots \cdot \sqrt{\log N}.
 \end{equation*}
 Moreover,
 \begin{equation*}
  \bar{b}_{2} \geq 0.0610739\ldots.
 \end{equation*}
\end{thm}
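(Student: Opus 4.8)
The plan is to follow the classical harmonic-analysis approach to $L_2$-discrepancy lower bounds pioneered by Roth and refined by Halász, using the Haar (or Rademacher-type) function expansion of $\D$ in dimension $2$, but pushing the bookkeeping further than in \cite{HM}. First I would fix an $N$-point set $\cP$ and an integer $n$ with $2^n\approx N$, and expand the discrepancy function with respect to the two-dimensional Haar basis $h_{j,\ell}\otimes h_{j',\ell'}$ supported on dyadic boxes. For each ``diagonal'' frequency level, i.e. for $j+j'\approx n$, at most roughly half of the $2^{j+j'}$ dyadic boxes at that level can contain a point of $\cP$, so the Haar coefficient $\langle \D, h_{j,\ell}\otimes h_{j',\ell'}\rangle$ of an empty box is forced to be of order $2^{-(j+j')} \cdot \text{(area term)}$, with a fixed sign; this gives the ``one-dimensional-worth'' of $\log N$-many good coefficients that drive the bound.

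The core of the argument, and the place where the improvement over \cite{HM} must come from, is the optimization step. Rather than using a single Riesz-product / Halász test function, I would use a carefully weighted combination: build an auxiliary function $\Psi = \sum_{j+j' = n} \gamma_{j,j'}\, f_{j,j'}$, where each $f_{j,j'}$ is a signed sum of the Haar functions over the empty boxes at level $(j,j')$ and $\gamma_{j,j'}$ are free nonnegative parameters, and then estimate $\norm \ge \langle \D, \Psi\rangle / \|\Psi\|_{L_2}$. The numerator is bounded below by a linear expression in the $\gamma_{j,j'}$ coming from the forced signs of the coefficients, while $\|\Psi\|_{L_2}^2$ expands into a diagonal term plus cross terms; the cross terms between different levels must be controlled (this is where orthogonality of Haar functions at distinct scales helps, but boxes can still overlap across levels in one coordinate). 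After reducing to a finite-dimensional quadratic optimization, one chooses the $\gamma_{j,j'}$ (or a continuous profile $\gamma(y)$ with $y = j/n$) optimally. The numerical constants $0.0515599\ldots$ and $0.0610739\ldots$ should emerge from solving this optimization exactly — in particular one must use the \emph{correct} extremum (the minimum over the relevant range, as the erratum to \cite{HM} stresses) of the resulting cubic-type expression. The gap between the two constants reflects that for $\bar b_2$ one may take $N\to\infty$ and lose nothing, whereas for $\bar c_2$ one must handle small $N$ and the passage from $2^n$ to a general $N$ costs a factor, so I would prove the clean asymptotic bound first and then track the loss for finite $N$.

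The main obstacle I anticipate is controlling the cross-level interaction terms in $\|\Psi\|_{L_2}^2$ tightly enough that the optimization yields a constant strictly larger than the $0.03276\ldots$ of \cite{HM}: a crude bound on these terms would wash out the gain. The resolution is presumably to exploit that the ``empty box'' indicator functions $f_{j,j'}$ have mean zero and that, after summing the Haar functions over a fixed level, $f_{j,j'}$ resembles a product of one-dimensional functions whose correlations at different scales decay geometrically, so the quadratic form is close to diagonal with an explicitly computable small perturbation. A secondary technical point is the rounding argument for general $N$ (not a power of $2$), handled by choosing $n = \lfloor \log_2 N \rfloor$ and absorbing the discrepancy between $N$ and $2^n$ into the constant; this is routine but must be done carefully to certify the stated digits of $\bar c_2$.
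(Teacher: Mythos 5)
There is a genuine gap: your plan restricts the test function $\Psi$ to the span of Haar functions attached to \emph{empty} dyadic boxes, and no choice of the weights $\gamma_{j,j'}$ in that span can produce the claimed constants. The duality bound $\norm\ge\langle \D,\Psi\rangle/\|\Psi\|_{L_2}$, optimized over all $\Psi$ in the span of a fixed set $S$ of (mutually orthogonal) Haar functions, equals exactly the Parseval sum $\bigl(\sum_{(j,m)\in S}2^{|j|}\mu_{j,m}^2\bigr)^{1/2}$ for the orthogonal projection onto that span. When $S$ consists of the empty boxes, every coefficient is exactly $-N2^{-2|j|-4}$, and there exist point configurations for which the number of empty boxes of each shape $j$ with $2^{|j|}\ge N$ is exactly $2^{|j|}-N$; so the optimally weighted combination over empty boxes reproduces, and cannot exceed, the corrected bound $0.03276\ldots$ of \cite{HM}. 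The cross-term control you worry about is not the real obstacle; the obstacle is that empty boxes alone have already been exhausted, so your optimization cannot reach $0.0515599\ldots$.

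The improvement actually requires an idea absent from your proposal: one must also use boxes containing exactly one point of $\cP$. A single such Haar coefficient can vanish (it equals $\mu_{j,m}^z-N2^{-2|j|-4}$, which has a zero), so these boxes cannot be added one at a time. The paper bundles, for each box $I_{j,m}$ of level $M$ or $M+1$ containing one point $z$, the three coefficients of $I_{j,m}$ and of the two next-level sub-boxes (one for each shape refinement) containing the same $z$, and proves a pointwise lower bound $2^{2M}\rho_{j,m}\ge\gamma(\kappa)$ for the bundle (Lemma \ref{bound_rhojm}); a double-counting of ``types'' prevents over-weighting coefficients shared between levels, and a convexity argument (Lemma \ref{evendistribution}) handles the worst-case distribution of points among boxes. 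A secondary error in your sketch: the gap between $0.0515599\ldots$ and $0.0610739\ldots$ is not a finite-$N$ rounding loss. Both constants come from the same function $\Delta(\kappa)$ of the fractional part $\kappa=\log_2 N-M$: the bound $\bar c_2$ is governed by $\inf_{\kappa}\Delta(\kappa)$, attained at $\kappa\in\{0,1\}$, while $\bar b_2$ is governed by $\sup_{\kappa}\Delta(\kappa)$ along a subsequence of $N$ realizing the maximizing $\kappa_0$.
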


The proof is a variant of the proof given in \cite{HM} but we go one step deeper in the analysis.

The paper is organized as follows. In Section \ref{sec:2}, we collect the necessary facts on
Haar-coefficients of the discrepancy function. In Section \ref{sec:3}, we describe the
steps of the proof omitting some technical details. These are given in Section \ref{sec:4}. 


\section{Haar-coefficients of the discrepancy function}\label{sec:2}

For $j=(j_1,j_2)\in\{-1,0,1,2,\ldots\}^{2}$ let $\bD_j$ be the set of $m=(m_1,m_2)\in\{0,1,\ldots\}^2$ with $0\leq m_i<2^{j_i}$ and let $|j|= \max(0,j_1)+\max(0,j_2)$. 
For such $j$ and $m$, we consider dyadic intervals
\begin{equation*}
 I_{j,m}:=\left[\frac{m_1}{2^{j_1}},\frac{m_1+1}{2^{j_1}}\right)\times \left[\frac{m_2}{2^{j_2}},\frac{m_2+1}{2^{j_2}}\right).
\end{equation*}
We denote by $I_{j,m}^{+,+}$ the left-lower, by $I_{j,m}^{+,-}$ the left-upper, by $I_{j,m}^{-,+}$ the right-lower, and by $I_{j,m}^{-,-}$ the right-upper quarter of $I_{j,m}$.
These are again dyadic intervals with a quarter of the area of $I_{j,m}$.
The Haar-functions $h_{j,m}$ are supported by $I_{j,m}$ and satisfy $h_{j,m}=1$ on $I_{j,m}^{+,+}$ and $I_{j,m}^{-,-}$, and $h_{j,m}=-1$ on $I_{j,m}^{+,-}$ and $I_{j,m}^{-,+}$.
Note that all dyadic intervals with fixed $j$ are congruent. Therefore we call $j$ the shape of the interval $I_{j,m}$.

Then by Parseval's equation we have
\begin{equation}\label{eq:pars}
 \norm^{2}=\sum_{j}2^{|j|}\sum_{m\in \bD_j}\left|\mu_{j,m}\right|^{2},
\end{equation}
where
\begin{equation*}
 \mu_{j,m}=\int_{\I^{2}}\D(x)\cdot h_{j,m}(x)\ud x
\end{equation*}
are the Haar-coefficients of the discrepancy function.

In \cite{HM} the above sum was estimated from below by summation over all $j$ and $m$ such that $I_{j,m}$ contains no point of $\cP$. For these $j$ and $m$ the Haar-coefficient $\mu_{j,m}$ has a very simple form. In this paper we also consider $j$ and $m$ such that $I_{j,m}$ contains exactly one point of $\cP$, and so we will obtain an improved lower bound.

The following two lemmas were already stated in \cite{H}:

\begin{lem}
 \label{lemma:muproduct}
 For $j\in\N_0^2$ and $m\in\bD_j$ we have
 \begin{equation*}
  \int_{\I^{2}}x_1x_2\cdot h_{j,m}(x_1,x_2)\ud x_1 \ud x_{2}= 2^{-2|j|-4}.
 \end{equation*}
\end{lem}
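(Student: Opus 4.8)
The statement to prove is Lemma~\ref{lemma:muproduct}: for $j\in\N_0^2$ and $m\in\bD_j$,
\[
\int_{\I^2} x_1 x_2 \cdot h_{j,m}(x_1,x_2)\,\mathrm{d}x_1\,\mathrm{d}x_2 = 2^{-2|j|-4}.
\]

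Since $j\in\N_0^2$, we have $|j| = j_1 + j_2$. The Haar function $h_{j,m}$ is a product $h_{j,m}(x_1,x_2) = h_{j_1,m_1}(x_1)\cdot h_{j_2,m_2}(x_2)$ of one-dimensional Haar functions (this factorization is implicit in the construction described in the excerpt: $h_{j,m}$ is $+1$ on the lower-left and upper-right quarters, $-1$ on the other two). The integrand $x_1 x_2$ also factors. So the plan is:

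The plan is to factor the double integral as a product of two one-dimensional integrals,
\[
\int_0^1 x_1 h_{j_1,m_1}(x_1)\,\mathrm{d}x_1 \cdot \int_0^1 x_2 h_{j_2,m_2}(x_2)\,\mathrm{d}x_2,
\]
and then compute each factor. For the one-dimensional integral, I would use that $h_{j_i,m_i}$ is supported on the dyadic interval $[m_i 2^{-j_i}, (m_i+1)2^{-j_i})$ of length $2^{-j_i}$, equal to $+1$ on the left half and $-1$ on the right half. First I would observe that the integral of $h_{j_i,m_i}$ against a constant is zero (equal positive and negative areas), so only the slope of the linear function $x_i$ matters and the integral is translation-independent in $m_i$. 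Concretely, shifting so the interval starts at the origin, one gets
\[
\int_0^{2^{-j_i}} x\, h\Bigl(2^{j_i}x\Bigr)\,\mathrm{d}x
= \int_0^{2^{-j_i-1}} x\,\mathrm{d}x - \int_{2^{-j_i-1}}^{2^{-j_i}} x\,\mathrm{d}x,
\]
which is an elementary computation giving $-2^{-2j_i-2}$ (the difference of the two halves equals minus the length of a half-interval times the length of the full interval, i.e. $-2^{-j_i-1}\cdot 2^{-j_i}$). Multiplying the two factors yields $(-2^{-2j_1-2})(-2^{-2j_2-2}) = 2^{-2(j_1+j_2)-4} = 2^{-2|j|-4}$, as claimed.

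The only technical point to handle carefully is justifying the one-dimensional reduction and the translation-invariance cleanly: one should note that $\int x_i\,h_{j_i,m_i}(x_i)\,\mathrm{d}x_i = \int (x_i - c)\,h_{j_i,m_i}(x_i)\,\mathrm{d}x_i$ for the center $c = (m_i + \tfrac12)2^{-j_i}$ of the support, since $\int h_{j_i,m_i} = 0$, and then the integral is manifestly independent of $m_i$ by symmetry. There is no real obstacle here; the lemma is a routine direct calculation, and the main thing is to present the factorization and the elementary one-dimensional integral transparently so the normalizing power of $2$ comes out correctly.
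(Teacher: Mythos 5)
Your proof is correct and is the standard argument; the paper itself gives no proof of this lemma (it is quoted from \cite{H}), and the tensor factorization $h_{j,m}(x_1,x_2)=h_{j_1,m_1}(x_1)h_{j_2,m_2}(x_2)$ followed by the elementary one-dimensional integral $\int_0^1 x\,h_{j_i,m_i}(x)\,\mathrm{d}x=-2^{-2j_i-2}$ is exactly what lies behind it. One small slip in your parenthetical aside: the difference of the two halves is minus the \emph{square} of the half-interval length, $-2^{-j_i-1}\cdot 2^{-j_i-1}$, not $-2^{-j_i-1}\cdot 2^{-j_i}$; your explicit integral computation and the stated value $-2^{-2j_i-2}$ are nevertheless correct, so the final answer $2^{-2|j|-4}$ stands.
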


\begin{lem}
 \label{lemma:mucharempty}
 For $z=(z_1,z_2)\in\I^{2}$, let $C_{z}:=[z_1,1)\times[z_2,1)$ and let $\chi_{C_z}$ be the characteristic function of $C_z$. Let $j\in\N_0^2$ and $m\in\bD_j$ be such that $z\notin I_{j,m}$. Then
 \begin{equation*}
  \int_{\I^{2}}\chi_{C_z}(x)\cdot h_{j,m}(x)\ud x=0.
 \end{equation*}
\end{lem}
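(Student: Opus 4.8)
My plan is to compute the integral directly using the product structure of both the Haar function $h_{j,m}$ and the characteristic function $\chi_{C_z}$. Write $j=(j_1,j_2)$ and $m=(m_1,m_2)$. The Haar function factorizes as $h_{j,m}(x_1,x_2)=h_{j_1,m_1}(x_1)\,h_{j_2,m_2}(x_2)$, where $h_{j_i,m_i}$ is the one-dimensional Haar function supported on the dyadic interval $[m_i/2^{j_i},(m_i+1)/2^{j_i})$, equal to $+1$ on its left half and $-1$ on its right half (for $j_i\ge 0$). Likewise $\chi_{C_z}(x_1,x_2)=\chi_{[z_1,1)}(x_1)\,\chi_{[z_2,1)}(x_2)$. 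Hence the double integral splits as a product of two one-dimensional integrals, $\int_\I \chi_{[z_1,1)}(x_1)h_{j_1,m_1}(x_1)\ud x_1 \cdot \int_\I \chi_{[z_2,1)}(x_2)h_{j_2,m_2}(x_2)\ud x_2$. So it suffices to show that at least one of these two one-dimensional factors vanishes.

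The hypothesis $z\notin I_{j,m}$ means precisely that $z_1\notin [m_1/2^{j_1},(m_1+1)/2^{j_1})$ or $z_2\notin [m_2/2^{j_2},(m_2+1)/2^{j_2})$; say the first holds. I then analyze $\int_\I \chi_{[z_1,1)}(x_1)h_{j_1,m_1}(x_1)\ud x_1$ in two cases. If $z_1 \ge (m_1+1)/2^{j_1}$, the set $[z_1,1)$ is disjoint from the support of $h_{j_1,m_1}$, so the integral is $0$. If $z_1 < m_1/2^{j_1}$, then $[z_1,1)$ contains the whole support of $h_{j_1,m_1}$, so the integral equals $\int_\I h_{j_1,m_1}(x_1)\ud x_1$, which is $0$ because the Haar function takes the value $+1$ and $-1$ on two halves of equal length. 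Either way the factor vanishes, and therefore so does the whole integral. The symmetric argument handles the case where it is the second coordinate that lies outside.

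There is really no serious obstacle here; the only thing to be a little careful about is bookkeeping the edge cases at the dyadic endpoints (half-open intervals), and making sure the degenerate levels $j_i=-1$, if they are meant to be allowed in this lemma, are covered — but the statement restricts to $j\in\N_0^2$, so $h_{j_i,m_i}$ is always a genuine $\pm1$ Haar function with mean zero and the argument above applies verbatim.
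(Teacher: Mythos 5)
Your proof is correct and complete. The paper itself gives no argument for this lemma (it simply cites \cite{H}), and your tensor-product factorization of both $h_{j,m}$ and $\chi_{C_z}$, followed by the observation that the offending one-dimensional factor either has empty intersection with the Haar support or integrates the mean-zero Haar function over its whole support, is exactly the standard argument one would supply; the case analysis at the dyadic endpoints is handled correctly by the half-open convention.
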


In the following we will also have to consider the case when $z\in I_{j,m}$:

\begin{lem}
 \label{lemma:mucharonepoint}
 With the notation of Lemma~\ref{lemma:mucharempty}, let now $z=(z_1,z_2)\in I_{j,m}$. Then
 \begin{equation*}
  \mu_{j,m}^z:=\int_{\I^{2}}\chi_{C_z}(x)\cdot h_{j,m}(x)\ud x=
  \left\{
  \begin{array}{rl}
   \left(z_1-\frac{m_1}{2^{j_1}}\right) \left(z_2-\frac{m_2}{2^{j_2}}\right) & \text{if } z\in I_{j,m}^{+,+},\\
   \left(z_1-\frac{m_1}{2^{j_1}}\right) \left(\frac{m_2+1}{2^{j_2}}-z_2\right) & \text{if } z\in I_{j,m}^{+,-},\\
   \left(\frac{m_1+1}{2^{j_1}}-z_1\right) \left(z_2-\frac{m_2}{2^{j_2}}\right) & \text{if } z\in I_{j,m}^{-,+},\\
   \left(\frac{m_1+1}{2^{j_1}}-z_1\right) \left(\frac{m_2+1}{2^{j_2}}-z_2\right) & \text{if } z\in I_{j,m}^{-,-}.\\
   \end{array}
\right.
 \end{equation*}
\end{lem}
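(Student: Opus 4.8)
The plan is to exploit the tensor structure of the Haar function together with Fubini's theorem, reducing the two-dimensional integral to a product of two elementary one-dimensional integrals whose value depends only on which half of the defining dyadic interval each coordinate of $z$ lies in.

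First I would write $h_{j,m}(x_1,x_2)=s_1(x_1)\,s_2(x_2)$, where $s_i$ is the one-dimensional Haar function attached to the interval $[\ell_i,r_i):=\left[\tfrac{m_i}{2^{j_i}},\tfrac{m_i+1}{2^{j_i}}\right)$, that is, $s_i=+1$ on its left half, $s_i=-1$ on its right half, and $s_i=0$ outside. Since $\chi_{C_z}(x)=\chi_{[z_1,1)}(x_1)\,\chi_{[z_2,1)}(x_2)$, Fubini gives
\begin{equation*}
  \mu_{j,m}^z=\left(\int_{z_1}^{1}s_1(x_1)\ud x_1\right)\left(\int_{z_2}^{1}s_2(x_2)\ud x_2\right).
\end{equation*}

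Next I would evaluate the single-variable integral $\int_{z_i}^{1}s_i(x_i)\ud x_i$ for $z_i\in[\ell_i,r_i)$. Because $s_i$ is supported in $[\ell_i,r_i)$ and integrates to zero over $\I$, we have $\int_{z_i}^{1}s_i=-\int_{\ell_i}^{z_i}s_i$. Writing $c_i=\tfrac12(\ell_i+r_i)$ for the midpoint and distinguishing whether $z_i$ lies in the left half $[\ell_i,c_i)$ or the right half $[c_i,r_i)$, a direct computation of the piecewise-linear primitive of $s_i$ yields $\int_{z_i}^{1}s_i(x_i)\ud x_i=-(z_i-\ell_i)$ in the first case and $-(r_i-z_i)$ in the second.

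Finally I would multiply the two factors. The point $z$ lies in $I_{j,m}^{+,+}$ precisely when $z_1\in[\ell_1,c_1)$ and $z_2\in[\ell_2,c_2)$, so the two factors are $-(z_1-\ell_1)$ and $-(z_2-\ell_2)$, whose product is $(z_1-\ell_1)(z_2-\ell_2)$; the remaining three quarters are handled identically, each time the two minus signs coming from the one-dimensional integrals cancelling. This reproduces the four cases of the asserted formula. I do not expect a genuine obstacle here: the only point requiring a moment of care is the treatment of the measure-zero boundary configurations (for instance $z_i$ equal to a midpoint $c_i$), where the half-open conventions defining $I_{j,m}^{\pm,\pm}$ make the case distinction consistent, and in any case such configurations do not affect the value of the integral.
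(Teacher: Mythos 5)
Your proposal is correct: the tensorization $h_{j,m}(x_1,x_2)=s_1(x_1)s_2(x_2)$, Fubini, and the evaluation of the two one-dimensional integrals via $\int_{z_i}^{1}s_i=-\int_{\ell_i}^{z_i}s_i$ all check out and reproduce the four stated cases with the correct signs. The paper itself gives no argument beyond calling this a ``simple calculation'' with a pointer to Lemma 3.3 of \cite{H}, and your computation is exactly the calculation being alluded to, so there is nothing further to compare.
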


The lemma is proved by simple calculations, see also Lemma 3.3 in \cite{H}.
As a corollary of the three lemmas we obtain immediately:

\begin{lem}\label{cor:haar}
For the Haar-coefficients of the discrepancy function $\D$ we have
\begin{equation*}
 \mu_{j,m}=
 \left\{
 \begin{array}{rl}
  -N2^{-2|j|-4} & \text{if }I_{j,m}\text{ contains no point from }\cP,\\
  \mu_{j,m}^z-N 2^{-2|j|-4} &\text{if }I_{j,m}\text{ contains exactly one point $z$ from }\cP.
 \end{array}
 \right.
\end{equation*}
\end{lem}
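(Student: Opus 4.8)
The final statement to prove is Lemma~\ref{cor:haar}, which asserts the two-case formula for the Haar coefficients $\mu_{j,m}$ of the discrepancy function. My plan is to derive this directly by combining the three preceding lemmas, exploiting the linearity of the integral against $h_{j,m}$.

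\medskip

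\noindent\textbf{The approach.} First I would rewrite the discrepancy function as a superposition of characteristic functions of corner boxes. Observe that for a point $p_i=(p_{i,1},p_{i,2})$ one has $\chi_{B_x}(p_i)=1$ exactly when $p_{i,1}<x_1$ and $p_{i,2}<x_2$, i.e.\ when $x\in C_{p_i}=[p_{i,1},1)\times[p_{i,2},1)$; hence $\sum_{i=1}^N\chi_{B_x}(p_i)=\sum_{i=1}^N\chi_{C_{p_i}}(x)$. Therefore
\begin{equation*}
 \D(x)=\sum_{i=1}^N\chi_{C_{p_i}}(x)-N x_1 x_2 ,
\end{equation*}
and by linearity
\begin{equation*}
 \mu_{j,m}=\sum_{i=1}^N\int_{\I^2}\chi_{C_{p_i}}(x)h_{j,m}(x)\ud x - N\int_{\I^2}x_1x_2\, h_{j,m}(x)\ud x .
\end{equation*}

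\medskip

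\noindent\textbf{Key steps.} Now I evaluate the two contributions. For the product term, Lemma~\ref{lemma:muproduct} gives $\int_{\I^2}x_1x_2\,h_{j,m}\ud x = 2^{-2|j|-4}$, contributing $-N\,2^{-2|j|-4}$ in every case. For the sum over points, I split the index set $\{1,\dots,N\}$ according to whether $p_i\in I_{j,m}$. If $p_i\notin I_{j,m}$, Lemma~\ref{lemma:mucharempty} shows the corresponding integral vanishes. If $I_{j,m}$ contains no point of $\cP$ at all, then every term vanishes and we are left with $\mu_{j,m}=-N\,2^{-2|j|-4}$, the first case. If $I_{j,m}$ contains exactly one point $z=p_{i_0}$ of $\cP$, then all terms with $i\neq i_0$ vanish, the single surviving term equals $\mu_{j,m}^z$ by definition, and Lemma~\ref{lemma:mucharonepoint} identifies it with the stated piecewise-quadratic expression; collecting this with the product term yields $\mu_{j,m}=\mu_{j,m}^z-N\,2^{-2|j|-4}$, the second case.

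\medskip

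\noindent\textbf{Main obstacle.} There is essentially no obstacle here: the lemma is a bookkeeping corollary, and the only point requiring care is the initial identity $\chi_{B_x}(p_i)=\chi_{C_{p_i}}(x)$, which swaps the roles of the point and the box-corner and is the reason the corner-box lemmas apply. (A minor technical subtlety is the treatment of points lying on the boundary of a dyadic interval or on the lines $x_i=p_{i,k}$; since these form a set of measure zero in $\I^2$, they do not affect the value of any integral, and one may assume without loss of generality that the cases in Lemmas~\ref{lemma:mucharempty} and~\ref{lemma:mucharonepoint} are exhaustive up to a null set.) Thus the proof is a direct assembly of Lemmas~\ref{lemma:muproduct}--\ref{lemma:mucharonepoint} via linearity.
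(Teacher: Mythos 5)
Your proposal is correct and is exactly the argument the paper intends: the paper states this lemma as an immediate corollary of Lemmas~\ref{lemma:muproduct}--\ref{lemma:mucharonepoint}, and your derivation (rewriting $\sum_i\chi_{B_x}(p_i)$ as $\sum_i\chi_{C_{p_i}}(x)$ up to a null set, then applying linearity and the three lemmas) is the standard way to fill in the omitted details. No issues.
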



\section{Main steps of the proof}\label{sec:3}

In this section, we describe the high level structure of the proof. The details will then be presented in the next section.

Let $N$ be the number of points in $\cP$. Let $M\in\N_0$ and $0\leq\kappa<1$ be such that $N=2^{M+\kappa}$. 
For $j\in\N_0^2$, let $|j|=j_1+j_2$ be the level of the dyadic intervals $I_{j,m}$.
For $r\in \N_0$, let
$$ A_r(j) = \{ (j,m) \,:\, m \in\bD_j \text{ and }\# I_{j,m} \cap \cP = r \} \qquad \text{and}\qquad a_r(j)=\# A_r(j) $$
be the set of indices of intervals of shape $j$ containing exactly $r$ points of $\cP$ and its cardinality, respectively.
Observe that we have $2^{|j|}$ intervals for fixed $j$ and altogether $N=2^{M+\kappa}$ points, so
\begin{equation}\label{eq:condition_Arj}
   \sum_{r=0}^\infty a_r(j) = 2^{|j|} \qquad \text{and} \qquad \sum_{r=0}^\infty r a_r(j) = 2^{M+\kappa}.
\end{equation} 
Since the intervals of a given shape $j$ are mutually disjoint, this implies
\begin{equation}\label{eq:condition_Ar0}
   a_0(j) \ge 2^{|j|} - N = 2^{|j|} - 2^{M+\kappa}.
\end{equation} 
Finally, for $\ell\in\N_0$, let
$$ A_r(\ell) = \bigcup_{|j|=\ell}  A_r(j)  \qquad \text{and} \qquad a_r(\ell) = \# A_r(\ell) = \sum_{|j|=\ell}  a_r(j).$$
Since there are $\ell+1$ different shapes $j$ with level $|j|=\ell+1$, we obtain from \eqref{eq:condition_Arj} 
\begin{equation}\label{eq:condition_Arl}
   \sum_{r=0}^\infty a_r(\ell) = (\ell+1) 2^{\ell} \qquad \text{and} \qquad \sum_{r=0}^\infty r a_r(\ell) = (\ell+1) 2^{M+\kappa}.
\end{equation} 

We begin with recalling the main idea of the proof of the lower bound in \cite{HM}. Using in Parseval's equation \eqref{eq:pars} 
only the intervals containing no point of $\cP$ together with Lemma \ref{cor:haar} and \eqref{eq:condition_Ar0} we have
\begin{equation}\label{eq:HM}
 \norm^{2} \geq \sum_{\ell=0}^\infty  2^{\ell}  \sum_{(j,m)\in A_0(\ell)} \mu_{j,m}^2 \ge \sum_{\ell=M+1}^\infty (\ell+1) 2^{\ell} \left( 2^{\ell} - 2^{M+\kappa} \right)  2^{2M+2\kappa-4\ell-8}.
\end{equation}
This sum can then be evaluated and estimated.

To improve upon this estimate, it is not enough to consider further intervals separately since each Haar coefficient of an interval containing points of $\cP$ can be zero.
So we have to bundle together some Haar coefficients.
In our approach, we consider intervals $I_{j,m}$ of level $|j|=M$ or $|j|=M+1$ containing one point together with the two intervals of level $M+1$ and $M+2$, respectively, that are
contained in $I_{j,m}$ and contain the same point. More formally, for $(j,m) \in A_1(\ell)$, define
\begin{equation}\label{eq:rho}
   \rho_{j,m} =  \mu_{j,m}^2 +  \mu_{j',m'}^2 + \mu_{j'',m''}^2 
\end{equation} 
where $j'$ and $j''$ with $|j'|=|j''|=\ell+1$ are distinct, $I_{j',m'}, I_{j'',m''} \subset I_{j,m}$ and $I_{j,m} \cap \cP = I_{j',m'} \cap \cP = I_{j'',m''} \cap \cP$.
For $u=0,1,2$, let us call an interval $I_{j',m'}$ of level $\ell+1$ with $(j',m') \in A_1(\ell+1)$ {\em of type $u$}, if exactly $u$ of the two intervals $I_{j,m}$ of level $\ell$
containing it satisfy $(j,m) \in A_1(j)$. That means that exactly $2-u$ of these two intervals contain at least one additional point of $\cP$.
We also define for $\ell\ge 1$
$$ B_u(\ell) = \{ (j,m) \in A_1(\ell) \,:\, I_{j,m} \text{ is of type $u$}\} \qquad \text{and}\qquad b_u(\ell)=  \# B_u(\ell).$$
It follows from these definitions that
$$
   a_1(\ell+1) = b_0(\ell+1) + b_1(\ell+1) + b_2(\ell+1) \qquad \text{and}\qquad 2 a_1(\ell) = b_1(\ell+1)+2 b_2(\ell+1). 
$$ 
for $\ell\in\N_0$. 
Indeed, the first identity counts the intervals in level $\ell+1$ containing exactly one point of $\cP$ in two different ways.
For the second identity, consider the bipartite graph with intervals in level $\ell$ and $\ell+1$, respectively, containing exactly 
one point of $\cP$ as the two sets of vertices and draw an edge between an interval $I_{j,m}$ of level $\ell$ and an interval $I_{j',m'}$ of level $\ell+1$
if $I_{j',m'}$ is contained in $I_{j,m}$. Then the degree of each interval $I_{j,m}$ of level $\ell$ is exactly 2 and the degree of each interval $I_{j',m'}$ of level $\ell+1$
is $u$ if $(j',m')$ is of type $u$. Counting the edges in two different ways yields the second identity.
This further implies 
\begin{equation}\label{eq:b1}
    2 b_0(\ell+1) + b_1(\ell+1) = 2a_1(\ell+1)-2 a_1(\ell). 
\end{equation}

The crucial improvement of the estimate \eqref{eq:HM} is now 
 \begin{equation}\label{eq:main}
   \norm^{2} \geq \sum_{\ell=0}^\infty  2^{\ell}  \sum_{(j,m)\in A_0(\ell)} \mu_{j,m}^2 + \sum_{(j,m)\in A_1(M)} 2^M \rho_{j,m} + \sum_{u=0}^2 \sum_{(j,m)\in B_u(M+1)} (2-u) 2^M \rho_{j,m}.
 \end{equation}
Considering \eqref{eq:rho}, we see that this inequality indeed holds since 
each Haar coefficient of an interval in level $M$ gets at most a weight $2^M$,  
each Haar coefficient of an interval in level $M+2$ gets at most a weight $2^{M+1}$
and, by definition of $B_u(M+1)$, each Haar coefficient in level $M+1$ gets at most a weight $2\cdot 2^M=2^{M+1}$. 

To estimate the $\rho_{j,m}$ we find in the next section an explicit function $\gamma:[-1,1] \to \R_+$ such that the following lemma holds.
\begin{lem}\label{bound_rhojm}
 We have $2^{2M} \rho_{j,m} \ge  \gamma(\kappa)$ if $(j,m)\in A_1(M)$ and $2^{2M} \rho_{j,m} \ge 2^{-2} \gamma(\kappa-1)$ if $(j,m)\in A_1(M+1).$
\end{lem}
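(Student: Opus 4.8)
The plan is to prove Lemma~\ref{bound_rhojm} by writing $\rho_{j,m}$ explicitly in terms of the position of the single point $z\in I_{j,m}\cap\cP$ and then minimizing over all admissible positions. Fix $(j,m)\in A_1(M)$, so $|j|=M$, and let $z=(z_1,z_2)$ be the unique point of $\cP$ in $I_{j,m}$. Introduce the rescaled local coordinates $s:=2^{j_1}z_1-m_1\in[0,1)$ and $t:=2^{j_2}z_2-m_2\in[0,1)$, which encode the position of $z$ inside $I_{j,m}$. By Lemma~\ref{cor:haar} and Lemma~\ref{lemma:mucharonepoint}, $\mu_{j,m}=\mu_{j,m}^z-N2^{-2|j|-4}$, where $\mu_{j,m}^z$ is one of the four bilinear expressions in $(s,t)$ divided by $2^{|j|}$ (depending on which quarter of $I_{j,m}$ contains $z$), and $N2^{-2|j|-4}=2^{M+\kappa}2^{-2M-4}=2^{\kappa-M-4}$. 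For the two sub-intervals $I_{j',m'},I_{j'',m''}$ of level $M+1$ that contain $z$, the point $z$ again sits in a determined quarter, and a short computation shows that the local coordinates of $z$ in these children are affine images ($s\mapsto 2s$ or $s\mapsto 2s-1$, etc.) of $(s,t)$. Substituting everything into \eqref{eq:rho} and factoring out $2^{-2M}$ yields $2^{2M}\rho_{j,m}$ as an explicit piecewise-polynomial function of $(s,t)\in[0,1)^2$ and of $\kappa$; define $\gamma(\kappa)$ to be the infimum of this function over $(s,t)$. By symmetry (reflecting $s\mapsto 1-s$ or $t\mapsto 1-t$ permutes the four quarters and the three terms consistently) the four cases give the same infimum, so $\gamma$ is well defined, and one checks the resulting formula extends naturally to the argument $\kappa-1\in[-1,0)$, which is what the second assertion needs.

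For the case $(j,m)\in A_1(M+1)$, the argument is the same with $M$ replaced by $M+1$: here $|j|=M+1$, the relevant subtracted term is $N2^{-2(M+1)-4}=2^{\kappa-M-1-4}$, and the three intervals in \eqref{eq:rho} have levels $M+1,M+2,M+3$. Factoring out $2^{-2(M+1)}=2^{-2}\cdot 2^{-2M}$ gives $2^{2M}\rho_{j,m}=2^{-2}\cdot\big(2^{2(M+1)}\rho_{j,m}\big)$, and the bracketed quantity is exactly the same explicit function as before but evaluated with the parameter $\kappa$ shifted by $-1$ (because $N=2^{M+\kappa}=2^{(M+1)+(\kappa-1)}$, so relative to level $M+1$ the role of the fractional part is played by $\kappa-1$). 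Hence $2^{2(M+1)}\rho_{j,m}\ge\gamma(\kappa-1)$, which is the claimed $2^{2M}\rho_{j,m}\ge 2^{-2}\gamma(\kappa-1)$.

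The substantive work, and the step I expect to be the main obstacle, is the explicit minimization over $(s,t)\in[0,1)^2$ that defines $\gamma$. After expanding, $2^{2M}\rho_{j,m}$ is a sum of three squares of bilinear-in-$(s,t)$ expressions shifted by the constant $2^{\kappa-4}$ (suitably scaled), so it is a polynomial of degree $4$ in each of $s$ and $t$; one must identify which quarter-pattern of the children is actually forced (not all $4\times 4\times 4$ combinations of quarters occur — the choice of quarter in a child is determined by $s,t$), reduce to a manageable number of polynomial pieces, and then locate the global minimum of each piece either on the boundary of $[0,1)^2$ or at an interior critical point. This is precisely the place where \cite{HM} went "one step less deep"; doing it carefully, and presenting $\gamma$ in closed form on all of $[-1,1]$, is the technical heart of the next section. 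The remaining bookkeeping — verifying that the four symmetry cases coincide and that the formula is continuous and matches at $\kappa=0$ — is routine once the piecewise minimization is in hand.
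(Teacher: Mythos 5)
Your setup---local coordinates for the point inside $I_{j,m}$, reduction by symmetry to one quarter, a case split according to which quarter of each of the three intervals contains the point, and the rescaling $M\mapsto M+1$, $\kappa\mapsto\kappa-1$ for the second inequality---is exactly the strategy of the paper's proof. But there is a genuine gap: you \emph{define} $\gamma(\kappa)$ to be the infimum of the resulting piecewise polynomial and then defer its evaluation to later. This makes the first inequality a tautology and leaves the lemma without content. The lemma is only useful if $\gamma$ is explicit and, above all, strictly positive: positivity of this infimum is the entire reason for bundling the three Haar coefficients (any single $\mu_{j,m}$ with a point in $I_{j,m}$ can vanish, but the sum of the three squares cannot), and the downstream argument needs the closed form --- Lemma \ref{evendistribution} uses the bound $\gamma(\kappa)\le 9\cdot 2^{2\kappa-13}$ to verify $\alpha\ge 2\beta$, and the final evaluation of $\Delta$ uses the value $\gamma(0)$. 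The minimization you postpone, namely the four-case analysis yielding the lower bounds $3\cdot 2^{2\kappa-11}$, $9\cdot 2^{2\kappa-13}$ and $2^{-4}h(\kappa)$ for $2^{2M}\rho_{j,m}$ and hence $\gamma(\kappa)=\min\bigl(9\cdot 2^{2\kappa-13},\,2^{-4}h(\kappa)\bigr)$, \emph{is} the proof of this lemma; without it nothing is established.

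Two further slips, one of which matters. For $(j,m)\in A_1(M+1)$ the three intervals in \eqref{eq:rho} have levels $M+1$, $M+2$, $M+2$, since the definition forces $|j'|=|j''|=\ell+1$; they are not of levels $M+1,M+2,M+3$ as you state. With your levels the three terms would not reproduce the same function under the shift $\kappa\mapsto\kappa-1$, so the self-similarity argument you invoke for the second inequality would break; with the correct levels it goes through exactly as in the paper. Separately, the subtracted constant at level $M+1$ is $N2^{-2(M+1)-4}=2^{\kappa-M-6}$, not $2^{\kappa-M-5}$; this is a harmless arithmetic slip but should be fixed before the rescaling bookkeeping is trusted.
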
 

Now it follows from \eqref{eq:main}, \eqref{eq:condition_Ar0}, Lemma \ref{cor:haar}, Lemma \ref{bound_rhojm} and \eqref{eq:b1} that
 \begin{equation}\label{eq:main1}
   \norm^{2} \geq 2^{-M} \Sigma_1 + \Sigma_2
 \end{equation}
with
 \begin{align}
   \Sigma_1 & = a_0(M) 2^{2\kappa-8} + a_0(M+1) 2^{2\kappa-11} + a_1(M) \gamma(\kappa) \nonumber\\
	          & \phantom{xxxxxxx}  + \big(b_1(M+1)+2 b_0(M+1) \big) 2^{-2} \gamma(\kappa-1) \label{eq:sigma1}\\
	          & = a_0(M) 2^{2\kappa-8} + a_1(M) \big(\gamma(\kappa)- 2^{-1} \gamma(\kappa-1) \big) \nonumber\\
						& \phantom{xxxxxxx}  + a_0(M+1) 2^{2\kappa-11} + a_1(M+1) \cdot 2^{-1} \gamma(\kappa-1) \nonumber
 \end{align}
and 
 \begin{equation}\label{eq:sigma_2}
   \Sigma_2 = \sum_{\ell=M+2}^\infty  2^{\ell}  (\ell+1) (2^{\ell}-2^{M+\kappa}) 2^{2M+2\kappa-4 \ell-8}.
 \end{equation}

Now the next Lemma shows that, under the conditions in \eqref{eq:condition_Arl}, $\Sigma_1$ is minimized if the points are
as evenly distributed in the boxes of level $M$ and $M+1$ as possible.

\begin{lem}\label{evendistribution}
 For $r\in\N_0$ and $\ell=M,M+1$, let $a_r(\ell)\in\N_0$ be such that the conditions in \eqref{eq:condition_Arl} are satisfied. Then we have
 \begin{eqnarray*}
   a_0(M) 2^{2\kappa-8} + a_1(M) \big(\gamma(\kappa)- 2^{-1} \gamma(\kappa-1) \big)  &\ge& (M+1)2^M \phi_M(\kappa) \\
	 a_0(M+1) 2^{2\kappa-11} + a_1(M+1) \cdot 2^{-1} \gamma(\kappa-1) &\ge& (M+2) 2^M \phi_{M+1}(\kappa)
 \end{eqnarray*}   
 with
 \begin{eqnarray*}
   \phi_M(\kappa)     &=&   (2-2^\kappa)\cdot \big(\gamma(\kappa) - 2^{-1} \gamma(\kappa-1) \big)  \\
	 \phi_{M+1}(\kappa) &=&    (2-2^\kappa)\cdot 2^{2\kappa-11} + 2^\kappa \cdot 2^{-1} \gamma(\kappa-1).  \\
 \end{eqnarray*}   
 \end{lem}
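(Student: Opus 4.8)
The plan is to treat each of the two inequalities as a linear program: minimize a linear functional of the nonnegative integers $a_r(\ell)$, $r=0,1,2,\dots$, subject to the two linear constraints in \eqref{eq:condition_Arl}. First I would handle the level $\ell=M$ inequality. The objective $a_0(M)\,2^{2\kappa-8} + a_1(M)\,\bigl(\gamma(\kappa)-2^{-1}\gamma(\kappa-1)\bigr)$ only involves $a_0$ and $a_1$, so the idea is to use the constraint $\sum_r a_r(M) = (M+1)2^M$ to replace $a_0(M) = (M+1)2^M - \sum_{r\ge 1} a_r(M)$ and the constraint $\sum_r r\,a_r(M) = (M+1)2^{M+\kappa}$ to control $a_1$. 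Substituting, the objective becomes $(M+1)2^M\,2^{2\kappa-8} + a_1(M)\bigl(\gamma(\kappa)-2^{-1}\gamma(\kappa-1) - 2^{2\kappa-8}\bigr) - 2^{2\kappa-8}\sum_{r\ge 2} a_r(M)$; since $\sum_{r\ge2} a_r(M) \le \tfrac12\sum_{r\ge2} r\,a_r(M) = \tfrac12\bigl((M+1)2^{M+\kappa} - a_1(M)\bigr)$, each step reduces to a one-variable estimate in $a_1(M)$. I would then check that the coefficient of $a_1(M)$ has a definite sign (this will come from properties of $\gamma$ established in the next section, so I can invoke it), which forces $a_1(M)$ to its extreme feasible value. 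The extreme configuration is the ``as evenly distributed as possible'' one: all nonempty boxes of level $M$ contain exactly one or exactly two points, i.e.\ $a_r(M)=0$ for $r\ge 3$; counting then gives $a_2(M) = (M+1)2^{M+\kappa} - (M+1)2^M$ boxes with two points (recall $0\le\kappa<1$, so $2^\kappa\in[1,2)$ and this is $\le (M+1)2^M$) and $a_1(M) = 2(M+1)2^M - (M+1)2^{M+\kappa} = (M+1)2^M(2-2^\kappa)$, with $a_0(M)=0$. Plugging this value of $a_1(M)$ (and $a_0(M)=0$) into the objective yields exactly $(M+1)2^M(2-2^\kappa)\bigl(\gamma(\kappa)-2^{-1}\gamma(\kappa-1)\bigr) = (M+1)2^M\phi_M(\kappa)$, as claimed.

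For the second inequality the argument is structurally identical but at level $\ell=M+1$, where there are $\ell+1 = M+2$ shapes, so $\sum_r a_r(M+1) = (M+2)2^{M+1}$ and $\sum_r r\,a_r(M+1) = (M+2)2^{M+\kappa}$. Here the relevant point count per box is smaller: since $N = 2^{M+\kappa}$ and there are $(M+2)2^{M+1}$ boxes of level $M+1$, the average occupancy is $2^{\kappa-1} < 1$, so the extreme configuration has every box containing zero or one point, $a_r(M+1)=0$ for $r\ge 2$, hence $a_1(M+1) = (M+2)2^{M+\kappa}$ and $a_0(M+1) = (M+2)2^{M+1} - (M+2)2^{M+\kappa} = (M+2)2^M(2-2^\kappa)$. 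Again I would write the objective using the first constraint to eliminate $a_0(M+1)$, obtaining $(M+2)2^{M+1}\,2^{2\kappa-11} + \sum_{r\ge 1} a_r(M+1)\bigl(\text{coeff}_r\bigr)$, and use $\sum_{r\ge1} a_r(M+1) \le \sum_{r\ge1} r\,a_r(M+1) = (M+2)2^{M+\kappa}$ together with the sign of the coefficients to push to the extreme. Substituting the extremal values gives $(M+2)2^M(2-2^\kappa)\,2^{2\kappa-11} + (M+2)2^{M+\kappa}\cdot 2^{-1}\gamma(\kappa-1) = (M+2)2^M\bigl[(2-2^\kappa)2^{2\kappa-11} + 2^\kappa\cdot 2^{-1}\gamma(\kappa-1)\bigr] = (M+2)2^M\phi_{M+1}(\kappa)$.

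The main obstacle I anticipate is verifying the sign of the coefficient of $a_1$ in each reduced objective — i.e.\ that $\gamma(\kappa) - 2^{-1}\gamma(\kappa-1) - 2^{2\kappa-8} \ge 0$ for the first inequality and the analogous inequality for the second. This is where a concrete understanding of $\gamma$ is needed, and in the paper $\gamma$ is only pinned down in the following section; the clean way to proceed is to state these two sign conditions as the properties of $\gamma$ actually required, defer their verification to Section~\ref{sec:4} where $\gamma$ is constructed, and then the linear-programming argument above goes through verbatim. A secondary technical point is the integrality of the $a_r$: strictly speaking the minimum of a linear functional over the integer feasible set could differ from the continuous relaxation, but since the extreme configuration described above is itself integral (it uses only occupancies $0,1,2$ and the counts come out to integers because $(M+1)2^M 2^\kappa = (M+1)2^{M+\kappa} = (M+1)N$ need not be an integer in general — here one must be slightly careful and argue directly that the displayed bound holds for all integer-feasible $(a_r)$ by the coefficient-sign monotonicity, rather than by exhibiting an integral minimizer), the inequality is valid as stated. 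I would write the final argument purely as a chain of inequalities driven by the two constraints and the sign conditions, which avoids any appeal to integral optimality.
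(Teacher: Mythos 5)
There is a genuine gap, and it sits exactly where you flagged the ``main obstacle'': the sign condition you anticipate is the reverse of the one that actually holds, and with the correct sign your chain of inequalities no longer reaches the claimed bound. Write $\alpha=2^{2\kappa-8}$ and $\beta=\gamma(\kappa)-2^{-1}\gamma(\kappa-1)$ and normalize so that $\sum_r a_r=1$ and $\sum_r r a_r=\sigma=2^\kappa\in[1,2)$. What the proof needs, and what follows from the explicit form of $\gamma$ constructed in Section~\ref{sec:4}, is $\alpha\ge 2\beta\ge 0$ (indeed $2\beta\le 2\gamma(\kappa)\le 9\cdot 2^{2\kappa-12}<\alpha$); your hoped-for condition $\gamma(\kappa)-2^{-1}\gamma(\kappa-1)-2^{2\kappa-8}\ge 0$, i.e.\ $\beta\ge\alpha$, is therefore false. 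With the true sign, the coefficient $\beta-\tfrac{\alpha}{2}$ of $a_1$ in your reduced bound $\alpha(1-\tfrac{\sigma}{2})+(\beta-\tfrac{\alpha}{2})a_1$ is negative, so your monotonicity argument must push $a_1$ to its \emph{maximal} feasible value --- and that maximum is not the evenly distributed value $2-\sigma$: by placing the excess mass in $a_r$ for large $r$ one can make $a_1$ arbitrarily close to $1$. Plugging $a_1=1$ into your reduced bound gives $\beta-\tfrac{\alpha}{2}(\sigma-1)$, which is strictly smaller than the target $\beta(2-\sigma)$ whenever $\alpha>2\beta$ and $\sigma>1$, and can even be negative. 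A parallel loss occurs in your level $M+1$ chain: bounding $\sum_{r\ge1}a_r$ by $\sigma$ and then dropping $\beta a_1\ge 0$ discards the $\beta\sigma$ term of the target $\alpha(1-\sigma)+\beta\sigma$. So the extremal configurations you exhibit are correct and do achieve equality, but the relaxations you propose do not prove that they are minimizers.

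The repair is the paper's argument. Instead of the aggregate bound $\sum_{r\ge2}a_r\le\tfrac12\sum_{r\ge2}ra_r$, use the exact identity $\alpha a_0+\beta a_1=\alpha(1-\sigma)+\beta\sigma+\sum_{r\ge2}\big(\alpha(r-1)-\beta r\big)a_r$ and observe that each coefficient satisfies $\alpha(r-1)-\beta r\ge\beta(r-2)\ge0$ for $r\ge2$ precisely because $\alpha\ge2\beta$; this settles the case $\sigma\in[0,1]$, i.e.\ level $M+1$. For $\sigma\in[1,2]$, i.e.\ level $M$, first use $\alpha a_0+\beta a_1\ge\beta(2a_0+a_1)$ and then $2a_0+a_1=2-\sigma+\sum_{r\ge3}(r-2)a_r\ge2-\sigma$. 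Both steps are driven by the single hypothesis $\alpha\ge2\beta\ge0$; that, and not $\beta\ge\alpha$, is the property of $\gamma$ you should isolate and defer to Section~\ref{sec:4}. Your remark about integrality is a non-issue once the argument is written this way, since these bounds hold for all real nonnegative $a_r$ satisfying the two constraints.
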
 

Now \eqref{eq:main} and Lemma \ref{evendistribution} imply
 \begin{equation}\label{eq:main2}
   \norm^{2} \geq  \Sigma'_1 + \Sigma'_2
 \end{equation}
with
 \begin{align}
   \Sigma'_1 
	= (M+1) (2-2^\kappa)\cdot \big(\gamma(\kappa) - 2^{-1} \gamma(\kappa-1) \big) + (M+2)  2^\kappa \cdot 2^{-1} \gamma(\kappa-1)  \label{eq:sigma1p}
 \end{align}
and 
 \begin{equation}\label{eq:sigma_2p}
   \Sigma'_2 = \sum_{\ell=M+1}^\infty  2^{\ell}  (\ell+1) (2^{\ell}-N) 2^{2M+2\kappa-4 \ell-8} \ge (M+2) \left(   3^{-1} \cdot 2^{2\kappa-8}   -  7^{-1} \cdot 2^{3\kappa-8} \right).
 \end{equation}
The last inequality is the direct computation done in \cite{HM}.
Observe that the summation in $\Sigma'_2$ starts at $\ell=M+1$ instead of $\ell=M+2$ for $\Sigma_2$. We now integrated the summand corresponding to the intervals not containing points of $\cP$
into this sum. This has the advantage that this sum is exactly the same sum already estimated in \cite{HM}. The complete improvement is now contained in $\Sigma'_1$.

Altogether, we obtain the improved bound
$$ \norm^{2} \geq (M+1) \Delta(\kappa) \ge \log N \cdot \frac{\Delta(\kappa)}{\log 2}$$
with
\begin{equation} \label{eq:Delta}
 \Delta(\kappa) = 3^{-1} \cdot 2^{2\kappa-8}   -  7^{-1} \cdot 2^{3\kappa-8} + (2-2^\kappa) \gamma(\kappa) + (2^\kappa-1) \gamma(\kappa-1).
\end{equation}
A final analysis shows that
\begin{equation}\label{analysis_functions}
 \inf_{\kappa\in [0,1)} \sqrt{\frac{\Delta(\kappa)}{\log 2}} = 0.0515599\ldots
 \ \text{ and } \  
 \sup_{\kappa\in [0,1)} \sqrt{\frac{\Delta(\kappa)}{\log 2}} = 0.0610739\ldots
\end{equation}
and finishes the proof of Theorem \ref{thm:main}. 
  

\section{Details of the proof}\label{sec:4}

In this section, we prove  Lemmas \ref{bound_rhojm} and \ref{evendistribution} and provide the analysis showing \eqref{analysis_functions}.

\begin{proof}[Proof of Lemma \ref{bound_rhojm}]
Let $(j,m) \in A_1(M)$ and assume that the point $p=(p_1,p_2)$ of $\cP$ which is in $I_{j,m}$ is contained in the left lower part of $I_{j,m}$, 
i.e., with $j^{\prime}=(j_{1}+1,j_2)$ and $j^{\prime\prime}=(j_1,j_2+1)$ we have $m^{\prime}=(2m_1,m_2)$ and $m^{\prime\prime}=(m_1,2m_2)$. 
The three other cases are treated in exactly the same way.
 
 This means that
 \begin{equation*}
  \mu_{j,m}=xy-\frac{2^{M+\kappa}}{2^{2j_1+2j_2+4}},
 \end{equation*}
where $x:=p_1-\frac{m_1}{2^{j_1}}$ and $y:=p_2-\frac{m_2}{2^{j_2}}$ and hence $0\leq x<\frac{1}{2^{j_1+1}}$ and $0\leq y<\frac{1}{2^{j_2+1}}$.

We have now to distinguish between four cases:
\begin{align*}
  \text{Case 1:} &&  0\leq x<\frac{1}{2^{j_1+2}}, \quad &&	0\leq y < \frac{1}{2^{j_2+2}}\\
  \text{Case 2:} &&	0\leq x<\frac{1}{2^{j_1+2}},\quad	&&	\frac{1}{2^{j_2+2}}\leq y < \frac{1}{2^{j_2+1}}\\
  \text{Case 3:} &&	\frac{1}{2^{j_1+2}}\leq x<\frac{1}{2^{j_1+1}},\quad	&&	0\leq y < \frac{1}{2^{j_2+2}}\\
  \text{Case 4:} &&	\frac{1}{2^{j_1+2}}\leq x<\frac{1}{2^{j_1+1}},\quad	&&	\frac{1}{2^{j_2+2}}\leq y < \frac{1}{2^{j_2+1}}
\end{align*}

\noindent{\bf Case 1:} In this case, by Lemma~\ref{lemma:mucharonepoint}, we have
$$  \rho_{j,m} = \left( xy-\frac{2^{M+\kappa}}{2^{2j_1+2j_2+4}} \right)^{2} + 2\left( xy-\frac{2^{M+\kappa}}{2^{2j_1+2j_2+6}} \right)^{2} = 
     \left( z-\frac{1}{2^{M+4-\kappa}} \right)^{2} + 2\left( z-\frac{1}{2^{M+6-\kappa}} \right)^{2}$$

with $0\leq z:=x y<\frac{1}{2^{M+4}}$. Simple analysis shows that $\rho_{j,m}$ attains its minimum $\frac{3}{2^{2M+11-2\kappa}}$ for $z=\frac{1}{2^{M+5-\kappa}}$.

\noindent{\bf Case 2:} In this case, again by Lemma \ref{lemma:mucharonepoint}, we have
$$  \rho_{j,m} = \left( xy-\frac{1}{2^{M+4-\kappa}} \right)^{2} + \left( xy-\frac{1}{2^{M+6-\kappa}} \right)^{2}+ 	\left( x\left( \frac{1}{2^{2j_2+1}}-y \right) -\frac{1}{2^{M+6-\kappa}} \right)^{2}.$$
Hence
\begin{eqnarray*}
 \left(2^{j_1+j_2+2}\right)^{2} \rho_{j,m} &=& 2^{2M+4}\rho_{j,m}\\
 &=& \left(\alpha\beta-\frac{1}{2^{2-\kappa}}\right)^{2}+  \left(\alpha\beta-\frac{1}{2^{4-\kappa}}\right)^{2} +\left( \alpha(1-\beta)-\frac{1}{2^{4-\kappa}} \right)^{2} =:f(\alpha,\beta)
\end{eqnarray*}
with $0\leq\alpha:=2^{j_1+1} x<\frac{1}{2}$ and $\frac{1}{2}\leq\beta:=2^{j_2+1} y <1.$

Simple analysis of $f(\alpha,\beta)$ shows that $f$ in this region for $\alpha$ and $\beta$ has minimal value $\frac{9\cdot2^{2\kappa}}{512}$ for $\alpha=\frac{7}{32}\cdot2^{\kappa}$ and $\beta=\frac{5}{7}$.
Hence, in Case 2, we always have
\begin{equation*}
 \rho_{j,m}\geq \frac{1}{2^{2M+4}}\cdot\frac{9\cdot2^{2\kappa}}{512}.
\end{equation*}

\noindent{\bf Case 3:} This case is treated in the same way as Case 2 and also results in
\begin{equation*}
 \rho_{j,m}\geq \frac{1}{2^{2M+4}}\cdot \frac{9\cdot2^{2\kappa}}{512} \quad\text{ always.}
\end{equation*}

\noindent{\bf Case 4:} In this case, again by Lemma~\ref{lemma:mucharonepoint}, we have
$$  \rho_{j,m} =  \left( xy-\frac{1}{2^{M+4-\kappa}} \right)^{2} + \left( x\left(\frac{1}{2^{j_2+1}}-y\right)-\frac{1}{2^{M+6-\kappa}} \right)^{2}+\left( \left(\frac{1}{2^{j_1+1}}- x\right)y -\frac{1}{2^{M+6-\kappa}} \right)^{2}.$$
Hence
\begin{multline*}
  2^{2M+4}\rho_{j,m}
 = \left(\alpha\beta-\frac{1}{2^{2-\kappa}}\right)^{2}+  \left(\alpha(1-\beta)-\frac{1}{2^{4-\kappa}}\right)^{2} 
 +\left( (1-\alpha)\beta-\frac{1}{2^{4-\kappa}} \right)^{2} =:g(\alpha,\beta),
\end{multline*}
now with
\begin{equation*}
 \frac{1}{2}\leq\alpha:=2^{j_1+1}x<1 \qquad \text{and} \qquad \frac{1}{2}\leq\beta:=2^{j_2+1}y<1.
\end{equation*}
The analysis of $g(\alpha,\beta)$, i.e., the determination of $\min_{\alpha,\beta}g(\alpha,\beta)$ is explicitly possible, but leads to an equation of degree 4. The analysis was carried out with the help of Mathematica. We will not give the details of this in principle elementary but elaborate analysis.
 
 It turns out that $g$
attains its minimum for each $\kappa$ with $0\leq\kappa<1$ for some $\alpha=\beta=\beta(\kappa)$. 
The function $\beta(\kappa)$ can be explicitly expressed with fourth-roots. Hence,
\begin{equation*}
 h(\kappa):=\min_{\alpha,\beta}g(\alpha,\beta)=g(\beta(\kappa),\beta(\kappa))
\end{equation*}
for each $\kappa$ is an explicitly given (but ugly) function. Its form is given by the dashed line in Figure~1.

Hence, in Case~4,
\begin{equation*}
 \rho_{j,m}\geq\frac{1}{2^{2M+4}}\cdot h(\kappa).
\end{equation*}

So, altogether we have in any case 
$$ 2^{2M} \rho_{j,m} \ge \min \big( 3\cdot 2^{2\kappa-11}, 9\cdot 2^{2\kappa-13}, 2^{-4} h(\kappa) \big) = \min \big( 9\cdot 2^{2\kappa-13}, 2^{-4} h(\kappa) \big) =:\gamma(\kappa). $$
So, $\gamma(\kappa)$ is the minimum of the two functions $2^{-4} h(\kappa)$ (dashed line) and $9\cdot 2^{2\kappa-13}$ (undashed line) in Figure 1 and it is explicitly given.
\begin{figure}[!htbp] \label{fig1}
 \centering
 \includegraphics[width=0.5\textwidth]{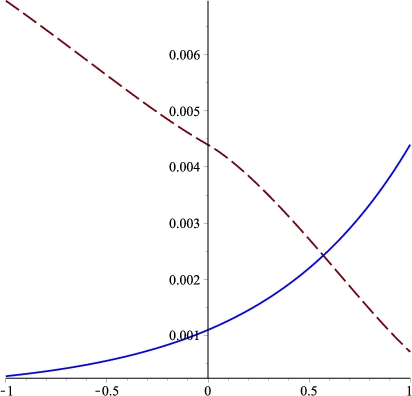}
 \caption{The functions $2^{-4} h(\kappa)$ and $9\cdot 2^{2\kappa-13}$}
\end{figure}

The inequality for $(j,m)\in A_1(M+1)$ immediately follows by noting that we can proceed in the same way as before by replacing $\kappa$ by $\kappa-1$ and $M$ by $M+1$ now.
\end{proof}

\begin{proof}[Proof of Lemma \ref{evendistribution}]
 For $r\in \N_0$, let $a_r \ge 0$ be such that
 \begin{equation}\label{eq:aux}
  \sum_{r\ge 0} a_r =1 \qquad \text{and} \qquad \sum_{r \ge 0} r a_r = \sigma.
 \end{equation}
 Furthermore, let $\alpha,\beta \ge 0$ be such that $\alpha \ge 2 \beta$.
 Then we have
 \begin{align}
   \alpha a_0 + \beta a_1 \ge \alpha(1-\sigma) + \beta \sigma & \qquad \text{if } 0\le \sigma\le 1 \label{eq:aux1}\\
	 \alpha a_0 + \beta a_1 \ge \beta (2-\sigma)                & \qquad \text{if } 1\le \sigma\le 2 \label{eq:aux2}.
 \end{align}
Indeed, \eqref{eq:aux1} follows directly from the identity
$$  \alpha a_0 + \beta a_1 = \alpha(1-\sigma) + \beta \sigma + \sum_{r\ge 2} \left( \alpha (r-1) - \beta r \right) a_r$$
and $\alpha(r-1) - \beta r \ge \beta (r-2) \ge 0$ for $r\ge 2$.
Similarly, \eqref{eq:aux1} follows from the identity
$$   2 a_0 +  a_1 = 2-\sigma + \sum_{r\ge 3} (r-2) a_r $$
together with $\alpha a_0 + \beta a_1 \ge \beta(2 a_0+a_1)$.

Now the inequalities to be proved follow from \eqref{eq:aux1} and \eqref{eq:aux2} by considering the special cases
$$ a_r = \big( (M+1) 2^M \big)^{-1}     a_r(M), \  \sigma=2^\kappa     \in [1,2],\   \alpha=2^{2\kappa-8},\   \beta = \gamma(\kappa) - 2^{-1} \gamma(\kappa-1) $$
and
$$ a_r = \big( (M+2) 2^{M+1} \big)^{-1} a_r(M+1),  \ \sigma=2^{\kappa-1} \in [0,1], \ \alpha=2^{2\kappa-11},  \ \beta =  2^{-1} \gamma(\kappa-1). $$	
Indeed, the equalities \eqref{eq:aux} follow from \eqref{eq:condition_Arl}. Moreover, in the first case
$$ 2 \beta = 2 \gamma(\kappa) -  \gamma(\kappa-1) \le 2 \gamma(\kappa) \le 9\cdot 2^{2\kappa-12} < 2^{2\kappa-8} = \alpha  $$
and in the second case
$$ 2 \beta = \gamma(\kappa-1) \le 9\cdot 2^{2\kappa-15} < 2^{2\kappa-11} = \alpha$$
also hold. Finally, $\beta\ge 0$ is obvious in the second case and is easily checked in the first case.
\end{proof}

\begin{proof}[Proof of \eqref{analysis_functions}]
\begin{figure}[!htbp]
 \centering
 \includegraphics[width=0.5\textwidth]{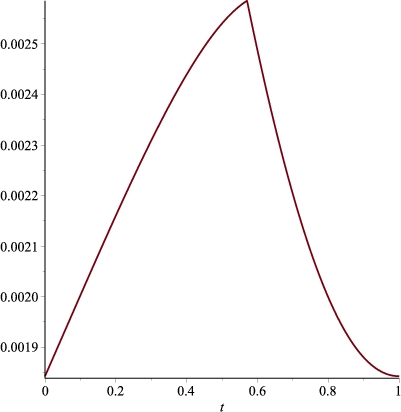}
 \caption{The function $\Delta(\kappa)$}
\end{figure}
It is an elementary task to analyze the function $\Delta(\kappa)$ on the unit interval. 
With the help of Mathematica and basic differential calculus we see 
that there is a $\kappa_0\in(0,1)$ with $\Delta$ monotonically increasing in $[0,\kappa_0)$ and monotonically decreasing in $(\kappa_0,1]$. 
Moreover, we have
$$ \Delta(0)=\Delta(1)=3^{-1}\cdot 2^{-6} - 7^{-1}\cdot 2^{-5} +\gamma(0)=\frac{317}{172032}.$$
Hence,
\begin{equation*}
 \min_{\kappa\in[0,1]}\Delta(\kappa)=\min\big(\Delta(0),\Delta(1)\big)=\frac{317}{172032}=0.00184268\ldots
\end{equation*}
(see the illustration of $\Delta(\kappa)$ in Figure~2) and consequently
\begin{equation*}
  \inf_{\kappa\in [0,1)} \sqrt{\frac{\Delta(\kappa)}{\log 2}} = 0.0515599\ldots
\end{equation*}
Moreover, numerical computation gives $\kappa_0=0.5705243\ldots$ and $\Delta(\kappa_0)=0.00258545\ldots$ which finally leads to
\begin{equation*}
  \sup_{\kappa\in [0,1)} \sqrt{\frac{\Delta(\kappa)}{\log 2}} = \sqrt{\frac{\Delta(\kappa_0)}{\log 2}} = 0.0610739\ldots
\end{equation*}
\end{proof}

We close with some remarks and open problems. The lower bounds in \cite{HM} are valid for
the weighted discrepancy. This is mainly due to the fact that only dyadic boxes are used that do not
contain points of the pointset. To show the better bounds in this paper we need to consider also dyadic
boxes containing points. This does not seem to easily generalize to the case of the weighted discrepancy.
Also, to extend the approach to dimension $d>2$ seems to be technically difficult since there are 
more cases to consider than in the proof for $d=2$.

It is also an open problem if possibly $  \bar{a}_{d} = \bar{b}_{d}$,
i.e. if the limit 
\begin{equation*}
 \lim_{N\to\infty} \inf_{\# \cP = N} \norm\cdot\left(\log N\right)^{-\frac{d-1}{2}}
\end{equation*}
exists.



\begin{thebibliography}{xxxx}

 \bibitem[BTR]{BTR} D. Bilyk, V. N. Temlyakov, R. Rui, \emph{Fibonacci sets and symmetrization in discrepancy theory.} J. Complexity  \textbf{28}  (2012),  no. 1, 18--36.
 \bibitem[FPPS]{FPPS} H. Faure, F. Pillichshammer, G. Pirsic. W. Ch. Schmid, \emph{$L_2$ discrepancy of generalized two-dimensional Hammersley point sets scrambled with arbitrary permutations.} Acta Arith. \textbf{141} (2010), 395--418.
 \bibitem[H]{H} A. Hinrichs, \emph{Discrepancy of Hammersley points in Besov spaces of dominating mixed smoothness.} Math. Nachr.  \textbf{283}  (2010),  no. 3, 478--488.
 \bibitem[HM]{HM} A. Hinrichs, L. Markhasin, \emph{On lower bounds for the $L_2$-discrepancy.} J. Complexity  \textbf{27}  (2011),  no. 2, 127--132.
 \bibitem[KN]{KN} L. Kuipers, H. Niederreiter, \emph{Uniform distribution of sequences.} John Wiley, New York, 1974.
 \bibitem[R]{R} K. F. Roth, \emph{On irregularities of distribution.} Mathematika \textbf{1} (1954), 73--79.

\end{thebibliography}
\end{document}